\numberwithin{equation}{section}
\newtheorem{theorem}{Theorem}[section]
\newtheorem{proposition}[theorem]{Proposition}
\newtheorem{lemma}[theorem]{Lemma}
\newtheorem{corollary}[theorem]{Corollary}
\theoremstyle{definition}
\theoremstyle{remark}
\newtheorem{remark}[theorem]{Remark}
\newtheorem{claim}[theorem]{Claim}
\newcommand{\N}{\mathbb{N}}
\newcommand{\Z}{\mathbb{Z}}
\newcommand{\Q}{\mathbb{Q}}
\newcommand{\C}{\mathbb{C}}
\newcommand{\HH}{\mathbb{H}}
\newcommand{\proj}{{\mathbb P}}
\newcommand{\SL}{{\rm SL}_2(\mathbb{Z})}
\newcommand{\Mp}{{\rm Mp}_2(\mathbb{Z})}
\newcommand{\Or}{{\rm O}^+}
\newcommand{\Ost}{\widetilde{{\rm O}}^+}
\newcommand{\DL}{\mathcal{D}_{L}}
\newcommand{\DK}{\mathcal{D}_{K}}
\newcommand{\FL}{\mathcal{F}_{L}}
\newcommand{\divi}{{\rm div}}
\newcommand{\ord}{{\rm ord}}
\newcommand{\Hast}{\mathcal{H}_{\ast}}
\begin{document}

\title[]{Finiteness of $2$-reflective lattices of signature $(2, n)$}
\author[]{Shouhei Ma}
\thanks{Supported by Grant-in-Aid for Scientific Research No.22224001.} 
\address{Department~of~Mathematics, Tokyo~Institute~of~Technology, Tokyo 152-8551, Japan}
\email{ma@math.titech.ac.jp}
\keywords{} 

\begin{abstract}
A modular form for an even lattice $L$ of signature $(2, n)$ is said to be $2$-reflective 
if its zero divisor is set-theoretically contained in the Heegner divisor defined by the $(-2)$-vectors in $L$. 
We prove that there are only finitely many even lattices with $n\geq7$ which admit $2$-reflective modular forms. 
In particular, there is no such lattice in $n\geq26$ except the even unimodular lattice of signature $(2, 26)$. 
This proves a conjecture of Gritsenko and Nikulin in the range $n\geq7$. 
\end{abstract} 

\maketitle


\section{Introduction}\label{sec:intro}

The concept of reflective modular forms for orthogonal groups of signature $(2, n)$ 
seems to have been first found in the works of Borcherds \cite{Bo1}, \cite{Bo2} and Gritsenko-Nikulin \cite{G-N1}, \cite{G-N2}.  
Through their work, reflective modular forms have been recognized as a key item in various related topics, such as  
the classification of Lorentzian Kac-Moody algebras \cite{G-N1}, \cite{G-N2}, \cite{G-N4}, \cite{Sch}; 
search of interesting hyperbolic reflection groups \cite{Bo4}; 
mirror symmetry for K3 surfaces \cite{G-N0}, \cite{G-N3}; 
and more recently search of modular varieties of non-general type \cite{Gr}, \cite{G-H}. 
Although there are some variations in the definition, 
those authors have shared the belief that such modular forms should be rare. 
This in turn reflects the expectation that the above objects would be exceptional. 
In this article we study \textit{$2$-reflective modular forms}, 
the most basic class of reflective modular forms, 
and prove that the number of lattices possessing such modular forms is finite in the range $n\geq7$. 

Let $L$ be an even lattice of signature $(2, n)$ with $n\geq3$. 
The Hermitian symmetric domain ${\DL}$ of type IV attached to $L$ is a connected component of the space 
\begin{equation*}\label{eqn:def type IV domain}
\{  {\C}\omega \in {\proj}(L\otimes{\C}) \; | \; (\omega, \omega)=0, (\omega, \bar{\omega})>0  \} . 
\end{equation*}
We write ${\Or}(L)$ for the subgroup of the orthogonal group ${\rm O}(L)$ preserving ${\DL}$. 
Let 
\begin{equation*}\label{eqn:-2Heegner}
\mathcal{H} = \bigcup_{\begin{subarray}{c} l\in L \\  (l, l)=-2 \end{subarray}} l^{\perp} \cap {\DL}
\end{equation*}
be the Heegner divisor of ${\DL}$ defined by the $(-2)$-vectors in $L$. 
A holomorphic modular form on ${\DL}$ with respect to a finite-index subgroup $\Gamma < {\Or}(L)$ 
and a character $\chi:\Gamma\to{\C}^{\times}$ is said to be $2$-reflective if its zero divisor is 
set-theoretically contained in $\mathcal{H}$. 
Note that this definition requires to specify the lattice $L$, not only the modular group $\Gamma$. 
In some cases, such a modular form has the geometric interpretation as 
characterizing discriminant locus in the moduli of lattice-polarized $K3$ surfaces. 
Following \cite{G-N2}, \cite{G-N3}, we say that the lattice $L$ is $2$-reflective if it admits 
a (non-constant) $2$-reflective modular form for some $\Gamma$ and $\chi$. 

\begin{theorem}\label{main}
There are only finitely many $2$-reflective lattices of signature $(2, n)$ with $n\geq7$. 
In particular, there is no $2$-reflective lattice in $n\geq26$ except the even unimodular lattice $II_{2,26}$ of signature $(2, 26)$. 
\end{theorem}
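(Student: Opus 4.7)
The plan is to translate the existence of a $2$-reflective modular form on $\DL$ into Nikulin's notion of a $2$-reflective Lorentzian lattice, obtained from $L$ by cusp reduction, and then to invoke the corresponding hyperbolic finiteness theorem together with a finite-overlattice count.

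First, normalize the data: replacing $F$ by a sufficiently high power, we may assume $\chi$ is trivial, $F$ is $\Or(L)$-invariant, and
\[
\divi(F) \;=\; \sum_{[l]} m_{[l]}\,\bigl(l^{\perp}\cap\DL\bigr),
\]
the sum running over $\Or(L)$-orbits of primitive $(-2)$-vectors $l\in L$ with $m_{[l]}\geq 0$. Next I would identify $F$ with a Borcherds product: by Bruinier's converse theorem (applied to $L$ when possible and to a suitable overlattice otherwise), the divisor hypothesis forces $F = \Psi(f)$, where $\Psi$ is Borcherds' singular theta lift and $f$ is a weakly holomorphic vector-valued modular form of weight $1-n/2$ for the Weil representation of $L$ whose principal part is supported on cosets of $(-2)$-vectors. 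Fixing a primitive isotropic $v \in L$ and passing to $K = v^{\perp}/\Z v$, a hyperbolic lattice of signature $(1, n-1)$, the Borcherds product expansion of $F$ at the associated $0$-dimensional cusp reads
\[
F \;=\; e^{2\pi i(\rho, Z)} \prod_{\alpha}\bigl(1 - e^{2\pi i(\alpha, Z)}\bigr)^{n(\alpha)},
\]
with $\rho\in K\otimes\R$ a Weyl vector and $\alpha$ running over primitive positive roots; the $2$-reflective assumption restricts $\alpha$ to $(-2)$-vectors of $K$. Positivity of the weight of $F$ places $\rho$ in the positive cone, while holomorphicity of $F$ at every cusp forces the polyhedron bounded by the walls $\{(\alpha, Z) = 0\}$ to have finite hyperbolic volume. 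Consequently the subgroup $W_{-2}(K) \subseteq \Or(K)$ generated by reflections in $(-2)$-vectors has finite index, so $K$ is a $2$-reflective Lorentzian lattice in the sense of Nikulin.

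Now invoke finiteness. Nikulin's theorem provides only finitely many $2$-reflective Lorentzian lattices of each rank $\geq 6$, which covers $n - 1 \geq 6$, i.e.\ $n\geq 7$. For each such $K$ the possible $L$ are signature-$(2, n)$ finite-index overlattices of $U \oplus K$, and the number of these is bounded via the finite discriminant form of $K$, giving finitely many $L$ in total. The sharper statement follows from the Nikulin--Esselmann--Vinberg classification: the only $2$-reflective Lorentzian lattice of rank $\geq 25$ is $II_{1, 25}$, whose unique signature-$(2, 26)$ envelope is $II_{2, 26}$, for which Borcherds' $\Phi_{12}$ realises the required form.

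The main obstacle is the middle step. Bruinier's converse theorem is cleanest when $L$ splits a hyperbolic plane, and in general one must pass to an overlattice and track the compatibility of the singular theta lift with pullback. One must further verify that the Weyl chamber read off from the product expansion is fundamental for the full $W_{-2}(K)$ rather than a proper reflection subgroup, and that holomorphy of $F$ translates precisely to finite hyperbolic volume of this chamber; both points are addressable by combining the Koecher principle with the explicit combinatorics of the Borcherds product expansion.
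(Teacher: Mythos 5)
Your middle step is the one that fails, and it fails precisely on the one example the theorem must allow. From the Borcherds product expansion of $F$ at a $0$-dimensional cusp you claim that holomorphy of $F$ forces the chamber cut out by the walls $\alpha^{\perp}$ to have finite hyperbolic volume, hence that $W_{-2}(K)$ has finite index in ${\rm O}(K)$ for $K=v^{\perp}/\Z v$. This implication is false. Take $L=II_{2,26}$ and $F=\Phi_{12}$, so $K=II_{1,25}$: by Conway's theorem ${\rm O}(II_{1,25})$ is the semidirect product of the $(-2)$-reflection group with the infinite group of affine automorphisms of the Leech lattice stabilizing the Weyl vector, so $W_{-2}(II_{1,25})$ has infinite index and the Weyl chamber has infinite volume, even though $\Phi_{12}$ is a holomorphic $2$-reflective Borcherds product. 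In other words, the bridge from reflectivity of a modular form on $\DL$ to Nikulin-reflectivity of the hyperbolic reduction is not a formal consequence of the Koecher principle and the product combinatorics; making that bridge precise is essentially the content of the Gritsenko--Nikulin conjectures that the theorem is about, so your argument assumes the hard part. The final classification step inherits the error: there are no reflective hyperbolic lattices of rank $\geq 23$ at all (Esselmann), so ``the only $2$-reflective Lorentzian lattice of rank $\geq 25$ is $II_{1,25}$'' is not available, and your scheme would then wrongly rule out $II_{2,26}$ rather than single it out. (A smaller issue: for a primitive isotropic $v$ with ${\rm div}(v)=d>1$, $L$ need not contain $U\oplus K$, so the overlattice count also needs care.)

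For comparison, the paper avoids any passage to hyperbolic reflection groups. For $n\geq 26$ it uses Bruinier's converse theorem as you do, but then simply multiplies the input form $f$ (of weight $1-n/2$, principal part of order $\leq 1$) by $\Delta$: the product is holomorphic of weight $13-n/2$, which is negative for $n\geq 27$, and for $n=26$ it must be an ${\rm Mp}_2(\Z)$-invariant vector of ${\C}[A_L]$, which forces $|A_L|=1$. For each fixed $n\geq 7$ it bounds the ratio of the vanishing orders of $F$ to its weight (the ``maximal slope'') by restricting $F$ to a signature-$(2,1)$ modular curve chosen from a finite pool depending only on $n$, and then invokes the finiteness theorem of \cite{Ma} for lattices where the corresponding $\Q$-divisor fails to be big; overlattice and length arguments remove the auxiliary hypotheses. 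If you want to salvage your approach, you would need an actual theorem converting $2$-reflective forms into finite-covolume reflection data, which is currently only conjectural.
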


This proves in the range $n\geq7$ a conjecture of Gritsenko-Nikulin in \cite{G-N3}  (part (a) of ``Arithmetic Mirror Symmetry Conjecture''), 
which seems to be first formulated in \cite{Ni2}. 
For $n=4, 5, 6$ we still have the finiteness under some condition 
(Corollaries \ref{cor:finiteness under l<n-3} and \ref{cor:finiteness variation}). 
When $n=3$, Gritsenko and Nikulin gave some classification in \cite{G-N2} \S 5.2 and \cite{G-N4} \S 2. 
 
Gritsenko-Nikulin's conjecture in \cite{G-N3} is a special version of 
their more general finiteness conjecture for reflective modular forms \cite{G-N1}. 
Scheithauer \cite{Sch} classified reflective modular forms of singular weight with vanishing order $\leq1$, 
for lattices of square-free level.  
Looijenga \cite{Lo} proved another part of Gritsenko-Nikulin's conjectures in \cite{G-N1}, 
which might also give an approach to the classification of reflective modular forms. 

Theorem \ref{main} is essentially an effective result. 
This means that it would be in principal possible with the method here to 
write down finitely many lattices which include all $2$-reflective ones in $n\geq7$ (cf.~Remark \ref{remark:effective bound}). 
However, this should still contain a large redundancy. 
 
We will first prove in \S \ref{sec:n>25} the second sentence of Theorem \ref{main} using the theory of Borcherds products. 
The argument works more generally for reflective modular forms. 
The unimodular lattice $II_{2,26}$ carries Borcherds' $\Phi_{12}$ function (\cite{Bo1}) as a 2-reflective modular form of weight $12$. 
This is the ``last'' $2$-reflective form. 

After thus establishing a bound of $n$, we then prove in \S \ref{sec:fixed n} the first sentence of Theorem \ref{main} for each fixed $n$. 
By a result of \cite{Ma} the finiteness problem reduces to the boundedness of relative vanishing orders of $2$-reflective forms. 
We can obtain estimates of those orders by restricting the $2$-reflective forms to certain modular curves. 
The key point is that we can always choose such curves from a certain finite list.  

The proof of Theorem \ref{main} is thus a sandwich by two approaches for classifying $2$-reflective lattices: 
via the theory of Borcherds products, and via the volume of $(-2)$-Heegner divisors. 

\S \ref{sec:reduction} is of preliminary nature, 
where we set up some basic reduction technique and a few notation.


\vspace{0.4cm}
\noindent
\textbf{Acknowledgements.} 
The author would like to thank Prof. K.~Yoshikawa for introducing him to this problem. 
The referee has suggested that the bound of $n$ could be improved to the present value, 
and has taught us the proof that is produced here. 
We wish to express our deep gratitude to him for his invaluable suggestion.


\section{Basic reductions}\label{sec:reduction}

In order to prove Theorem \ref{main}, it will be useful to narrow the class of $2$-reflective modular forms 
that we actually deal with. 
Throughout $L$ will be an even lattice of signature $(2, n)$ with $n\geq3$. 
We first kill the characters $\chi$. 

\begin{lemma}
Suppose that $L$ admits a $2$-reflective form 
with respect to $\Gamma<{\Or}(L)$ and a character $\chi:\Gamma\to{\C}^{\times}$. 
Then $L$ also has a $2$-reflective form with respect to $\Gamma$ and the trivial character. 
\end{lemma}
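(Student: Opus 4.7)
The plan is to replace $F$ by a suitable power $F^N$. If $F$ transforms as $F(\gamma z) = \chi(\gamma) j(\gamma, z)^k F(z)$ for $\gamma \in \Gamma$, then $F^N$ is a holomorphic modular form of weight $Nk$ with character $\chi^N$. Since the vanishing locus of $F^N$ coincides set-theoretically with that of $F$, and is therefore still contained in $\mathcal{H}$, the form $F^N$ is $2$-reflective as soon as $\chi^N = 1$. The problem thus reduces to showing that $\chi : \Gamma \to \C^\times$ has finite order.

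For this, I would use that $\chi$ factors through the abelianization $\Gamma^{\mathrm{ab}} = \Gamma/[\Gamma,\Gamma]$, so it is enough to know that $\Gamma^{\mathrm{ab}}$ is finite. This is a standard consequence of Margulis' finiteness theorem for arithmetic lattices in semisimple real Lie groups of real rank $\geq 2$: the group $\mathrm{O}(2,n)(\R)$ has real rank $\min(2,n) = 2$ for $n \geq 2$, and any finite-index subgroup $\Gamma$ of ${\Or}(L)$ is arithmetic there. Granting this, the order $N$ of $\chi$ is finite and $G := F^N$ is a modular form with respect to $\Gamma$ with trivial character and weight $Nk$. As already noted, $\mathrm{supp}\,{\divi}(G) = \mathrm{supp}\,{\divi}(F) \subset \mathcal{H}$, so $G$ is the desired $2$-reflective form.

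The only real obstacle is the finite-order property of $\chi$, and everything else is purely formal. If one wishes to avoid appealing to Margulis, an alternative would be to argue directly via a set of generators of $\Gamma$ adapted to the orthogonal structure (e.g.\ reducing modulo some level so that $\Gamma$ is controlled by reflections, whose $\chi$-images must lie in $\{\pm 1\}$), but in the range $n \geq 3$ considered here the arithmetic-group route is by far the cleanest.
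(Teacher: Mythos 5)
Your argument is correct and is essentially the paper's own proof: both raise $F$ to the power of the (finite) order of $\chi$, with finiteness coming from the finiteness of the abelianization of $\Gamma$ as an arithmetic lattice in a higher-rank group (Margulis). No further comment is needed.
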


\begin{proof}
Let $F$ be the given $2$-reflective form. 
Since the abelianization of $\Gamma$ is finite (\cite{Mar} Proposition 6.19 in p.~333), 
the image of $\chi$ is of finite order, say $d$. 
Hence $F^d$ is a modular form with respect to $\Gamma$ and the trivial character. 
It is obvious that $F^d$ is also $2$-reflective. 
\end{proof}

We will not mention the character when it is trivial. 
Next we are free to change the arithmetic group $\Gamma$ inside ${\Or}(L)$. 

\begin{lemma}\label{lemma:reduction group}
Suppose that $L$ admits a $2$-reflective form with respect to some $\Gamma<{\Or}(L)$. 
Then $L$ also has a $2$-reflective form with respect to any other finite-index subgroup $\Gamma'<{\Or}(L)$. 
\end{lemma}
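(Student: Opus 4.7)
The plan is to bridge $\Gamma$ and $\Gamma'$ through their intersection $\Gamma_0 = \Gamma \cap \Gamma'$, which is of finite index in both. By the previous lemma I may assume that the given $2$-reflective form $F$ for $\Gamma$ has trivial character. Since $\Gamma_0 \subset \Gamma$, the form $F$ is automatically a modular form (with trivial character) for $\Gamma_0$, and its zero divisor is unchanged, so it is still $2$-reflective. The easy direction of passing down to a subgroup is thus free.

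The substance is to go back up from $\Gamma_0$ to $\Gamma'$. Choosing coset representatives $g_1, \ldots, g_r$ of $\Gamma'/\Gamma_0$, I would form the norm
\[
G \;=\; \prod_{i=1}^{r} g_i^{\ast} F ,
\]
where $g_i^{\ast} F$ denotes the translate of $F$ by $g_i \in {\Or}(L)$ (with the appropriate automorphy factor). Triviality of the character of $F$ on $\Gamma_0$ ensures that $G$ does not depend on the choice of representatives. For any $\gamma \in \Gamma'$, the elements $g_i \gamma$ again form a complete set of coset representatives of $\Gamma_0$ in $\Gamma'$ after reshuffling, and a routine computation gives $\gamma^{\ast} G = \chi(\gamma) G$ for some character $\chi$ of $\Gamma'$. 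Hence $G$ is a modular form for $\Gamma'$ with character $\chi$, of weight $r\cdot\mathrm{wt}(F)$.

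To verify $2$-reflectivity of $G$, observe that $\divi(G) = \sum_i g_i^{\ast}(\divi F)$. The key point is that the set of $(-2)$-vectors in $L$ is manifestly ${\Or}(L)$-invariant, and hence so is $\mathcal{H}$; therefore $g_i^{-1}(\mathcal{H}) = \mathcal{H}$ for every $i$, and $\divi(G)$ is set-theoretically contained in $\mathcal{H}$. Because $F$ is non-zero of positive weight, so is $G$, which is therefore non-constant. A final application of the previous lemma removes $\chi$, yielding a $2$-reflective form for $\Gamma'$ with trivial character.

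There is no serious obstacle here: the argument rests only on the ${\Or}(L)$-invariance of $\mathcal{H}$, which is immediate from the definition, and on the standard norm construction in the theory of automorphic forms. The role of the preceding lemma is precisely to make this construction well defined, by allowing us to strip away characters at both ends of the passage $\Gamma \leadsto \Gamma_0 \leadsto \Gamma'$.
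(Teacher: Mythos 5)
Your proposal is correct and follows essentially the same route as the paper: intersect the two groups, take the product of the translates of $F$ over coset representatives of $\Gamma\cap\Gamma'$ in $\Gamma'$, and use that each representative preserves $L$ and hence $\mathcal{H}$. Your extra care about a possible character on the norm (handled by the preceding lemma) is a harmless refinement of the same argument.
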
 

\begin{proof}
Let $F$ be the given $2$-reflective form, 
and write $\Gamma''=\Gamma\cap\Gamma'$.  
We choose representatives $\gamma_1, \cdots, \gamma_{\delta}\in\Gamma'$ of the coset $\Gamma''\backslash\Gamma'$ 
and take the product 
\begin{equation*}
F' = \prod_{i=1}^{\delta} (F|_{\gamma_i}). 
\end{equation*}
This is a modular form with respect to $\Gamma'$. 
Since each $\gamma_i$ preserves $L$, 
the zero divisor of $F|_{\gamma_i}$ is contained in $\gamma_i^{\ast}\mathcal{H}=\mathcal{H}$. 
Hence $F'$ is $2$-reflective. 
\end{proof}

It is convenient to normalize the choice of modular group to the following one. 
Let $L^{\vee}$ be the dual lattice of $L$ and $A_L=L^{\vee}/L$ be the discriminant group, 
which is canonically equipped with the ${\Q}/{\Z}$-valued quadratic form defined by 
$(\lambda, \lambda)/2$ mod ${\Z}$ for $\lambda\in L^{\vee}/L$. 
The subgroup of ${\Or}(L)$ that acts trivially on $A_L$ is denoted by ${\Ost}(L)$. 
If $F$ is a $2$-reflective form on ${\DL}$ with respect to ${\Ost}(L)$ and the trivial character, 
we shall say simply that $F$ is a $2$-reflective form for $L$ and that $L$ has the $2$-reflective form $F$. 
By the above lemmas, such $F$ exists exactly when $L$ is $2$-reflective in the sense of \S \ref{sec:intro}. 

So far we have fixed the reference lattice $L$. 
We can reduce the lattice in the following way. 

\begin{lemma}\label{lemma:reduction lattice}
Suppose that $L$ has a $2$-reflective form. 
Then any even overlattice $L'$ of $L$ has a $2$-reflective form too. 
\end{lemma}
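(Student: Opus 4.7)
The plan is to mimic the averaging/product trick of Lemma \ref{lemma:reduction group}, with the inclusion of groups $\widetilde{O}^+(L) \subset \widetilde{O}^+(L')$ playing the role of $\Gamma'' \subset \Gamma'$. Note that $L \subset L' \subset L'^{\vee} \subset L^{\vee}$, so the two domains coincide, $\DL = \mathcal{D}_{L'}$, and the $(-2)$-vectors of $L$ are a fortiori $(-2)$-vectors of $L'$, whence $\mathcal{H}_L \subset \mathcal{H}_{L'}$ inside this common domain.

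First I would verify the key group inclusion $\widetilde{O}^+(L) \subset \widetilde{O}^+(L')$. If $g \in \widetilde{O}^+(L)$, then $g$ acts trivially on $A_L = L^{\vee}/L$, hence preserves the isotropic subgroup $H := L'/L \subset A_L$ corresponding to the even overlattice $L'$; this means $g$ preserves $L'$. Moreover, via $L'^{\vee} \subset L^{\vee}$ there is a natural inclusion $A_{L'} = L'^{\vee}/L' \hookrightarrow A_L/H$, so the trivial action of $g$ on $A_L$ descends to trivial action on $A_{L'}$. Hence $g \in \widetilde{O}^+(L')$.

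Next I would observe that this inclusion has finite index. Both $\widetilde{O}^+(L)$ and $\widetilde{O}^+(L')$ are finite-index subgroups of the commensurable arithmetic groups $O^+(L)$ and $O^+(L')$ acting on the same rational vector space, so the index $\delta := [\widetilde{O}^+(L') : \widetilde{O}^+(L)]$ is finite. Let $F$ be a $2$-reflective form for $L$, which by assumption is modular with respect to $\widetilde{O}^+(L)$ with trivial character. Choose coset representatives $\gamma_1, \dots, \gamma_{\delta} \in \widetilde{O}^+(L')$ of $\widetilde{O}^+(L) \backslash \widetilde{O}^+(L')$ and set
\begin{equation*}
F' \; = \; \prod_{i=1}^{\delta} (F|_{\gamma_i}).
\end{equation*}
By the standard computation (as in Lemma \ref{lemma:reduction group}), $F'$ is a holomorphic modular form on $\mathcal{D}_{L'} = \DL$ with respect to $\widetilde{O}^+(L')$ and the trivial character, and it is non-constant because each factor is.

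Finally I would check $2$-reflectivity: the zero divisor of $F|_{\gamma_i}$ is $\gamma_i^{-1}(\divi F) \subset \gamma_i^{-1}\mathcal{H}_L \subset \gamma_i^{-1}\mathcal{H}_{L'} = \mathcal{H}_{L'}$, where the last equality uses that $\gamma_i$ preserves $L'$ and hence permutes its $(-2)$-vectors. Summing over $i$ gives $\divi F' \subset \mathcal{H}_{L'}$, so $F'$ is a $2$-reflective form for $L'$. The main conceptual step is the group inclusion in the first paragraph; everything else is a direct adaptation of Lemma \ref{lemma:reduction group}. I do not foresee a substantive obstacle, provided the elementary discriminant-group calculation for the even overlattice is carried out correctly.
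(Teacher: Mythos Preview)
Your proof is correct and follows essentially the same route as the paper: identify $\mathcal{D}_L=\mathcal{D}_{L'}$, observe $\mathcal{H}_L\subset\mathcal{H}_{L'}$, verify the inclusion $\widetilde{O}^+(L)\subset\widetilde{O}^+(L')$ via the discriminant group, and then invoke the averaging trick of Lemma~\ref{lemma:reduction group} (which the paper simply cites rather than spelling out again). The only difference is presentational.
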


\begin{proof}
Let $F$ be the given $2$-reflective form for $L$. 
Since $L\otimes{\Q}=L'\otimes{\Q}$, we can identify ${\DL}$ with $\mathcal{D}_{L'}$ canonically. 
Since $(-2)$-vectors in $L$ are also $(-2)$-vectors in $L'$, 
the $(-2)$-Heegner divisor on ${\DL}$ is contained in the $(-2)$-Heegner divisor on $\mathcal{D}_{L'}$ 
under this identification. 
Hence we may view $F$ as a $2$-reflective form on $\mathcal{D}_{L'}$ with reference lattice $L'$, 
with respect to ${\Ost}(L)<{\Or}(L'\otimes{\Q})$. 
Since $L'$ is contained in $L^{\vee}$, any element of ${\Ost}(L)$ preserves $L'$ and acts trivially on $A_{L'}$. 
Hence we have ${\Ost}(L)\subset{\Ost}(L')$, 
and the claim follows from Lemma \ref{lemma:reduction group} applied to $L'$. 
\end{proof}

\begin{corollary}\label{cor:reduction lattice}
If $L$ is not $2$-reflective, neither is any finite-index sublattice of $L$. 
\end{corollary}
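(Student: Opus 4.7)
The plan is to observe that this corollary is simply the contrapositive of Lemma \ref{lemma:reduction lattice}. Let $M$ be any finite-index sublattice of $L$. Since $L$ is even by the standing assumption throughout \S \ref{sec:reduction}, and $M \subset L \subset M \otimes \Q$ with $[L : M] < \infty$, the lattice $L$ qualifies as an even overlattice of $M$ in the sense required by Lemma \ref{lemma:reduction lattice}.

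Now I would argue by contradiction: suppose $M$ is $2$-reflective. Then Lemma \ref{lemma:reduction lattice}, applied with the roles of ``$L$'' and ``$L'$'' in that lemma played by $M$ and $L$ respectively, produces a $2$-reflective form for $L$, contradicting the hypothesis that $L$ is not $2$-reflective. Hence no finite-index sublattice of $L$ can be $2$-reflective. There is no real obstacle here; the only thing worth checking is that the definition of ``$2$-reflective'' being used is the one normalized via $\Ost$ in the discussion preceding Lemma \ref{lemma:reduction lattice}, so that Lemma \ref{lemma:reduction lattice} applies directly, but this is already built into the conventions adopted in \S \ref{sec:reduction}.
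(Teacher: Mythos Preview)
Your proposal is correct and is exactly the argument the paper has in mind: the corollary is stated without proof immediately after Lemma \ref{lemma:reduction lattice} because it is its contrapositive, and you have spelled this out accurately. The only minor remark is that you need not worry about $M$ being even, since any sublattice of an even lattice is automatically even.
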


We have the following decomposition \eqref{eqn:decomp Heegner div ver I} of the $(-2)$-Heegner divisor $\mathcal{H}$. 
For $\lambda\in A_L$ and $m\in{\Q}$ we write 
\begin{equation*}
\mathcal{H}(\lambda, m) = \bigcup_{\begin{subarray}{c} l\in L+\lambda \\  (l, l)=2m \end{subarray}} l^{\perp} \cap {\DL}
\end{equation*}
for the Heegner divisor of discriminant $(\lambda, m)$. 
In particular, $\mathcal{H}=\mathcal{H}(0, -1)$. 
Let $\pi_L\subset A_L$ be the subset of elements of order $2$ and norm $-1/2$. 
For each $\mu\in \pi_L$ we write $\mathcal{H}_{\mu} = \mathcal{H}({\mu}, -1/4)$. 
We also set 
\begin{equation*}\label{eqn:nonsplit Heegner div} 
\mathcal{H}_0 = \bigcup_{\begin{subarray}{c} l\in L, \; (l, l)=-2 \\ {\divi}(l)=1 \end{subarray}} l^{\perp} \cap {\DL},  
\end{equation*}
where for a primitive vector $l\in L$, ${\divi}(l)$ is the natural number generating the ideal $(l, L)$. 
Then we have the decomposition 
\begin{equation}\label{eqn:decomp Heegner div ver I}
\mathcal{H} = \mathcal{H}_0 + \sum_{\mu\in\pi_L}\mathcal{H}_{\mu} 
\end{equation}
with no common component between different ${\Hast}$. 
When the lattice $L$ contains $2U$,  
the Eichler criterion (cf.~\cite{Sc} \S 3.7) tells that 
each ${\Hast}$ is an ${\Ost}(L)$-orbit of a single quadratic divisor $l^{\perp}\cap{\DL}$. 

We also prefer the algebro-geometric setting: 
consider the quotient space ${\FL}={\Ost}(L)\backslash{\DL}$, 
which is a quasi-projective variety of dimension $n$. 
Let $H, H_0, H_{\mu}\subset{\FL}$ be the algebraic divisors given by 
$\mathcal{H}, \mathcal{H}_0, \mathcal{H}_{\mu}$ respectively. 
Then we have 
\begin{equation*}\label{eqn:decomp Heegner div ver II}
H = H_0 + \sum_{\mu\in\pi_L}H_{\mu}, 
\end{equation*}
and the Eichler criterion says that each $H_{\ast}$ is irreducible when $L$ contains $2U$. 
The stabilizer of a $(-2)$-vector $l\in L$ in ${\Ost}(L)$, 
viewed as a subgroup of ${\Or}(K)$ for the orthogonal complement $K=l^{\perp}\cap L$, 
contains ${\Ost}(K)$ by \cite{Ni1}. 
Therefore, when $l$ defines the component $H_{\ast}$, we have a finite morphism $\mathcal{F}_K\to H_{*}$.


\section{Absence in higher dimension}\label{sec:n>25}

In this section we prove the second sentence of Theorem \ref{main}. 
In an earlier version of this paper we proved it only for $n\geq30$ using Borcherds' duality theorem \cite{Bo4}; 
the proof presented below was suggested by the referee.  

\begin{proposition}\label{prop:non-exceptional n>26}
Let $L$ be an even lattice of signature $(2, n)$ with $n\geq26$ and $L\not\simeq II_{2,26}$.  
Then $L$ is not $2$-reflective. 
\end{proposition}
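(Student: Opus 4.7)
The plan is to invert the Borcherds product construction on $F$ and derive an arithmetic obstruction via Serre duality in the dual Weil representation.

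Using Lemma \ref{lemma:reduction lattice}, I first replace $L$ by a suitable even overlattice so that it contains $2U$ as an orthogonal summand, which is harmless for the nonexistence statement. Bruinier's converse to Borcherds' lift then identifies $F$ with the Borcherds lift of a weakly holomorphic vector-valued modular form $f$ of weight $1 - n/2$ for the Weil representation $\rho_{L}$. The principal part of $f$ reads off the divisor of $F$: since $F$ is $2$-reflective with divisor $m_{0} H_{0} + \sum_{\mu \in \pi_{L}} m_{\mu} H_{\mu}$, the only nonzero principal-part coefficients of $f$ are $c_{f}(0, -1) = m_{0}$ and $c_{f}(\mu, -1/4) = m_{\mu}$, and the weight of $F$ equals $k = c_{f}(0, 0)/2$.

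Next, Serre duality for vector-valued modular forms yields the identity
\begin{equation*}
2k \cdot c_{g}(0, 0) + m_{0} \cdot c_{g}(0, 1) + \sum_{\mu \in \pi_{L}} m_{\mu} \cdot c_{g}(\mu, 1/4) \;=\; 0
\end{equation*}
for every holomorphic $g \in M_{1 + n/2}(\rho_{L}^{\ast})$, obtained as the vanishing of the constant term of the scalar pairing $\langle f, g \rangle$ on $\SL$. Applying this to the zero-cusp Eisenstein series $E$ normalized so that $c_{E}(0, 0) = 1$, whose remaining Fourier coefficients are given explicitly by the Bruinier--Kuss formula in terms of a Dirichlet $L$-value at $1 - n/2$ and local $p$-adic densities, produces a single linear relation between the positive integer $k$ and the nonnegative integers $m_{0}, \{m_{\mu}\}$, which cannot all vanish since $F$ is nonconstant.

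For $L = II_{2, 26}$ the Weil representation is trivial, $\pi_{L}$ is empty, $E = E_{14}$ has $c_{E}(0, 1) = -24$, and the relation $2k = 24 m_{0}$ admits the solution $(k, m_{0}) = (12, 1)$ corresponding precisely to $\Phi_{12}$. The bulk of the proof is then to show that for every other even lattice $L$ of signature $(2, n)$ with $n \geq 26$, the Eisenstein relation has no admissible positive solution. This requires a sign-and-magnitude analysis of $c_{E}(0, 1)$ and $c_{E}(\mu, 1/4)$ via the Bruinier--Kuss formula: one must control the global $L$-value factor in the range of weights $\geq 14$ and rule out that the local density contributions conspire to produce cancellations of the required sign. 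The heart of the argument, and its main obstacle, is precisely this positivity/cancellation analysis, which hinges on the arithmetic of the discriminant form $A_{L}$ and may require augmenting the single Eisenstein obstruction with further holomorphic forms in $M_{1 + n/2}(\rho_{L}^{\ast})$ when Eisenstein alone is not sharp enough.
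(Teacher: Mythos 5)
Your setup (reduce to $L\supset 2U$, invoke Bruinier's converse theorem, read the divisor off the principal part of $f$) is the same as the paper's, but the obstruction you then propose cannot do the job, and the step you defer is precisely the whole content of the statement. The pairing of the principal part of $f$ against the Eisenstein series $E\in M_{1+n/2}(\rho_L^{\ast})$ is not an obstruction to existence: by the Bruinier--Kuss formula the coefficients $c_E(\lambda,m)$ with $m>0$ are negative for every $L$ (this is exactly why Borcherds products with effective divisor have positive weight), so the relation $2k=-m_0c_E(0,1)-\sum_{\mu}m_{\mu}c_E(\mu,1/4)$ is solvable with $k>0$ for \emph{any} nonnegative, not-all-zero choice of $m_0,m_{\mu}$; it is merely the weight formula, for $II_{2,26}$ and for every other lattice alike. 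Hence the announced ``sign-and-magnitude analysis'' showing that the Eisenstein relation has ``no admissible positive solution'' cannot succeed, and the genuine obstruction must come from cusp forms in $S_{1+n/2}(\rho_L^{\ast})$: you would need, for every discriminant form in $n\geq26$, cusp forms whose coefficients at $(0,1)$ and $(\mu,1/4)$ have controlled signs, and nothing of the sort is constructed (you yourself flag this as the main obstacle). The paper bypasses all of this with a one-line device: $f\Delta$ is a nearly holomorphic form of weight $13-n/2$ which is holomorphic at the cusp with constant term $\beta_0\mathbf{e}_0$; for $n\geq27$ the weight is negative, so $f\equiv0$, and for $n=26$ it must be a constant ${\Mp}$-invariant vector, which $\beta_0\mathbf{e}_0$ can only be when $|A_L|=1$, i.e. $L\simeq II_{2,26}$. (A minor bookkeeping slip in your dictionary: since $\mathcal{H}(0,-1)$ already contains the components $\mathcal{H}_{\mu}$, the principal-part coefficient at $(\mu,-1/4)$ is $\beta_{\mu}-\beta_0$, not the multiplicity $m_{\mu}$ itself.)

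There is also a gap in your reduction step: replacing $L$ by an even overlattice containing $2U$ is not ``harmless'', because when $L$ has finite index in $II_{2,26}$ the maximal overlattice \emph{is} $II_{2,26}$, for which the nonexistence statement is false, so no contradiction can be propagated back down. The paper treats this case separately by choosing an intermediate lattice $L\subset L''\subsetneq II_{2,26}$ with $II_{2,26}/L''$ cyclic; such an $L''$ still contains $2U$ (its discriminant group has length $\leq2$) but is not unimodular, so the first step applies to it and Corollary \ref{cor:reduction lattice} finishes the argument. Your write-up needs an analogous case distinction.
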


\begin{proof}
We first prove the assertion under the assumption that $L$ contains $2U$. 
In that case, if $L$ has a $2$-reflective form $F$, its divisor can be written as 
\begin{eqnarray*}
{\divi}(F) & = & \beta_0\mathcal{H}_0 + \sum_{\mu\in\pi_L} \beta_{\mu}\mathcal{H}_{\mu} \\ 
& =  & \beta_0\mathcal{H} + \sum_{\mu\in\pi_L} (\beta_{\mu}-\beta_0) \mathcal{H}_{\mu}
\end{eqnarray*}
for some nonnegative integers $\beta_*$. 
By the theorem of Bruinier (\cite{Br} \S 5.2, and also \cite{Br2}), $F$ is a Borcherds product: 
there exists a nearly holomorphic modular form $f(\tau)$ of weight $1-n/2$ and type $\rho_L$ for ${\Mp}$ with principal part 
\begin{equation*}
\beta_0\, q^{-1}\mathbf{e}_0 + \sum_{\mu\in\pi_L} (\beta_{\mu}-\beta_0)\, q^{-1/4}\mathbf{e}_{\mu}, 
\end{equation*}
such that $F$ is the Borcherds lift of $f$. 
Here $q=e^{2\pi i\tau}$, 
${\Mp}$ is the metaplectic cover of ${\SL}$, 
$\rho_L$ the Weil representation of ${\Mp}$ on the group ring ${\C}[A_L]$, 
and $\mathbf{e}_{\lambda}\in{\C}[A_L]$ the basis vector corresponding to the element $\lambda\in A_L$.  

Consider the product $f(\tau) \Delta(\tau)$ with the classical cusp form $\Delta(\tau)$ of weight $12$. 
This is a nearly holomorphic modular form of weight $13-n/2$ and type $\rho_L$ with Fourier expansion 
\begin{equation}\label{eqn:Fourier expansion of fDelta}
\beta_0 \mathbf{e}_0 + \textrm{(higher power of $q$)}. 
\end{equation}
In particular, $f \Delta$ is holomorphic at the cusp. 
When $n\geq27$, the weight $13-n/2$ is negative and thus $f \Delta\equiv 0$, so $f\equiv0$. 
When $n=26$, $f \Delta$ has weight $0$ and hence must be (constantly) an ${\Mp}$-invariant vector in ${\C}[A_L]$. 
By \eqref{eqn:Fourier expansion of fDelta} this vector should be $\beta_0\mathbf{e}_0$. 
On the other hand,  $\mathbf{e}_0$ is mapped to $|A_L|^{-1/2}\sum_{\lambda\in A_L}\mathbf{e}_{\lambda}$ 
by the action of the element $\left( \begin{pmatrix}0 & -1 \\ 1 & 0\end{pmatrix}, \sqrt{\tau}\right)$ of ${\Mp}$, 
hence cannot be ${\Mp}$-invariant when $|A_L|\ne 1$. 
Therefore $f\equiv 0$. 

Now consider the general case $L$ does not necessarily contain $2U$. 
Take a maximal even overlattice $L'\supset L$. 
Then $A_{L'}$ is anisotropic and hence has length $\leq3$. 
By Nikulin's theory \cite{Ni1}, we see that $L'$ contains $2U$. 
If $L'\not\simeq II_{2,26}$, $L'$ is not $2$-reflective by our first step, and neither is $L$ by Corollary \ref{cor:reduction lattice}. 
In case $L'\simeq II_{2,26}$, we choose an intermediate lattice $L\subset L''\subset II_{2,26}$ such that $II_{2,26}/L''$ is cyclic. 
Then $A_{L''}$ has length $\leq 2$ and so $L''$ contains $2U$, and we can apply our first step to $L''$. 
\end{proof}

Let us remark that the first part of this proof works more generally for reflective modular forms. 
A modular form on ${\DL}$ is called \textit{reflective} if its divisor is 
set-theoretically contained in the union of quadratic divisors $l^{\perp}\cap{\DL}$ defined by reflective vectors $l$ of $L$. 

\begin{proposition}\label{prop:non-exceptional reflective}
Let $L$ be an even lattice of signature $(2, n)$ with $n\geq26$ containing $2U$. 
Assume that $L$ is not isometric to $II_{2,26}$. 
Then there is no reflective modular form for ${\Ost}(L)$. 
\end{proposition}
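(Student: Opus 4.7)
The plan is to rerun the first part of the proof of Proposition \ref{prop:non-exceptional n>26} essentially verbatim, with one small numerical addition to handle general reflective vectors in place of just $(-2)$-vectors. The only substantive new ingredient is an inequality showing that the principal part of Bruinier's input $f$ still has $q$-exponents bounded below by $-1$.

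First I would unwind the reflectivity condition. A primitive $l\in L$ with $(l,l)=-2a$, $a>0$ an integer, is reflective iff $2(l,L)\subset (l,l)\Z$, equivalently $a\mid d$ for $d=\divi(l)$. The associated Heegner hyperplane $l^\perp\cap\DL$ lies in $\mathcal{H}(\overline{l/d},\,-a/d^2)$, and the divisibility relation forces $a\leq d$, whence
$$-1\;\leq\;-\frac{a}{d^2}\;<\;0,$$
with the lower equality iff $a=d=1$, i.e.\ iff $l$ is an ordinary $(-2)$-vector of divisibility $1$ (contributing to $\mathcal{H}_0$).

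Next, suppose for contradiction that $F$ is a non-constant reflective form for $\Ost(L)$. Since $L$ contains $2U$, Bruinier's theorem presents $F$ as the Borcherds lift of a nearly holomorphic vector-valued modular form $f$ of weight $1-n/2$ and type $\rho_L$. By the bound above, the principal part of $f$ has all $q$-exponents in $[-1,0)$, and exponent $-1$ appears only in the $\mathbf{e}_0$ component, with coefficient $\beta_0\in\Z_{\geq 0}$ equal to the multiplicity of $\mathcal{H}_0$ in $\divi(F)$. Hence $g:=f\Delta$ is a holomorphic vector-valued modular form of weight $13-n/2$ whose Fourier expansion begins $\beta_0\mathbf{e}_0+O(q)$. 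From here the argument is identical to the proof of Proposition \ref{prop:non-exceptional n>26}: for $n\geq 27$, the weight of $g$ is negative so $g\equiv 0$; for $n=26$, $g$ has weight zero and must be a constant $\Mp$-invariant element of $\C[A_L]$, which the Fourier expansion forces to be $\beta_0\mathbf{e}_0$. Since $\mathbf{e}_0$ is sent to $|A_L|^{-1/2}\sum_\lambda\mathbf{e}_\lambda$ by the $S$-element, invariance gives either $|A_L|=1$, which would yield $L\simeq II_{2,26}$ and contradict the hypothesis, or $\beta_0=0$, in which case $g$ vanishes at the cusp and $g\equiv 0$. Either way $f\equiv 0$, so $F$ is constant.

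The only step going beyond the proof of Proposition \ref{prop:non-exceptional n>26} is the inequality $a/d^2\leq 1$, an immediate consequence of the reflectivity condition $a\mid d$. I do not expect any additional obstacle: once this bookkeeping is checked, the rest is a formal rerun requiring no new analytic input beyond Bruinier's theorem and the $S$-action on the Weil representation.
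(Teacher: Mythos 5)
Your proposal is correct and follows essentially the same route as the paper: reflectivity of $l$ (your condition $a\mid\divi(l)$, equivalent to the paper's $\divi(l)\in\{d,2d\}$ for $(l,l)=-2d$) forces the principal part of Bruinier's input $f$ to have singularity of order at most $1$ with the $q^{-1}$ term confined to $\beta_0\mathbf{e}_0$, after which the multiplication by $\Delta$ and the weight/invariance argument of Proposition \ref{prop:non-exceptional n>26} is rerun verbatim. The only cosmetic difference is that the paper records the divisor via the Eichler-orbit bookkeeping $\mathcal{H}(\lambda,-1/d)-\sum_{2\lambda'=\lambda}\mathcal{H}(\lambda',-1/4d)$, which does not affect your exponent bound or the identification of the $q^{-1}$ coefficient.
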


\begin{proof}
A primitive vector $l\in L$ with $(l, l)=-2d$ is reflective if and only if ${\divi}(l)=2d$ or $d$. 
If we write $\lambda=[l/{\divi}(l)]\in A_L$, $l^{\perp}\cap{\DL}$ belongs to $\mathcal{H}(\lambda, -1/4d)$ in the first case, and to 
\begin{equation*}
\mathcal{H}(\lambda, -1/d) - \sum_{2\lambda'=\lambda}\mathcal{H}(\lambda', -1/4d)
\end{equation*}
in the second case. 
By the Eichler criterion and Bruinier's theorem, a reflective modular form is the Borcherds lift of a nearly holomorphic modular form 
with principal part 
\begin{equation*}
\sum_{d\in{\N}} \sum_{\begin{subarray}{c} \lambda\in A_L, {\ord}(\lambda)=d \\ (\lambda, \lambda)=-2/d \end{subarray}} 
\left( \beta_{\lambda}q^{-1/d}\mathbf{e}_{\lambda} + \sum_{2\lambda'=\lambda} \beta_{\lambda'}q^{-1/4d}\mathbf{e}_{\lambda'} \right). 
\end{equation*}
The singularity is of order at most $1$, and the coefficient of $q^{-1}$ is $\beta_0\mathbf{e}_0$. 
Thus we can argue similarly. 
\end{proof}


\section{Finiteness at each dimension}\label{sec:fixed n}

In this section we prove that with $n\geq7$ fixed, 
there are only finitely many $2$-reflective lattices of signature $(2, n)$. 
In \S \ref{ssec:slope} we first show that the finiteness follows if we could universally bound 
the orders of zero of $2$-reflective forms relative to the weights, 
using a result from \cite{Ma} and under the condition that the lattices contain $2U$. 
Then we give an estimate of such relative orders 
by restricting the modular forms to ``general'' modular curves on ${\FL}$ 
and appealing to the well-known situation on the curves. 
In \S \ref{ssec:pool} we obtain a desired bound by proving that 
we can choose such modular curves always from a finite list, under an assumption on $A_L$. 
This deduces the finiteness under that condition. 
If $n\geq7$, we can remove the assumption by a lattice-theoretic argument.

\subsection{Maximal slope}\label{ssec:slope}

Let us assume in this subsection that the lattices $L$ contain $2U$. 
Let $F$ be a $2$-reflective form for $L$ of weight $\alpha$. 
As in the proof of Proposition \ref{prop:non-exceptional n>26}, we can write 
\begin{equation*}
{\divi}(F) = \beta_0\mathcal{H}_0 + \sum_{\mu\in\pi_L} \beta_{\mu}\mathcal{H}_{\mu}
\end{equation*}
for some nonnegative integers $\beta_*$. 
We shall call ${\max}_{*}(\beta_{*}/\alpha)$ the \textit{maximal slope} of $F$ where $*\in \{ 0, \pi_L \}$, 
and denote it by $\lambda(F)$. 
The reason to consider this invariant is the following.  

\begin{proposition}\label{prop:finiteness under bounded slope}
Fix a positive rational number $\lambda$. 
Then there are only finitely many even lattices $L$ of signature $(2, n)$ with $n\geq3$ and containing $2U$ 
such that $L$ has a $2$-reflective form $F$ with $\lambda(F)\leq \lambda$. 
\end{proposition}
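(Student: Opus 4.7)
The plan is to combine the existence of the $2$-reflective form $F$ with a volume comparison on the modular variety $\FL$, and to appeal to the finiteness theorem of \cite{Ma}. Writing $\divi(F)=\beta_0\mathcal{H}_0+\sum_{\mu\in\pi_L}\beta_\mu\mathcal{H}_\mu$ with $\beta_\ast/\alpha\leq\lambda$ for every $\ast\in\{0\}\cup\pi_L$, the Baily--Borel ampleness of the Hodge line bundle $\mathcal{L}$, together with the identification of modular forms of weight $\alpha$ with sections of $\mathcal{L}^{\otimes\alpha}$, yields the linear equivalence
\[
\alpha\,\mathcal{L} \;\sim\; \beta_0\,\overline{H_0} + \sum_{\mu\in\pi_L}\beta_\mu\,\overline{H_\mu}
\]
in $\mathrm{Pic}(\FL^{\ast})\otimes\Q$. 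The slope hypothesis then says that $\mathcal{L}$ lies, up to a scalar of size at most $\lambda$, in the effective cone spanned by the $(-2)$-Heegner components.

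Intersecting with $\mathcal{L}^{n-1}$ and using the Eichler-criterion observation at the end of \S\ref{sec:reduction} --- each $H_\ast$ receives a finite morphism from $\mathcal{F}_{K_\ast}$, where $K_\ast=l^{\perp}\cap L$ is the orthogonal complement of a defining $(-2)$-vector --- I obtain
\[
\mathcal{L}^n \;\leq\; C(n)\,\lambda\,\sum_{\ast}\vol(\mathcal{F}_{K_\ast}),
\]
where the sum runs over the $|\pi_L|+1$ components. Next I would invoke the Hirzebruch--Mumford volume formula, which expresses both sides in closed form as a positive power of $|\mathrm{disc}\,L|$ (respectively $|\mathrm{disc}\,K_\ast|$) multiplied by products of local $p$-adic densities. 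Since the exponent on the left is strictly larger than that on the right, since $|\mathrm{disc}\,K_\ast|$ is polynomially comparable to $|\mathrm{disc}\,L|$ through $2|\mathrm{disc}\,K_\ast|=|\mathrm{disc}\,L|\cdot\divi(l)^{2}$ with $\divi(l)\in\{1,2\}$, and since $|\pi_L|\leq|A_L|=|\mathrm{disc}\,L|$ grows only polynomially, the displayed inequality forces $|\mathrm{disc}\,L|$ to be bounded by a constant depending solely on $n$ and $\lambda$. Finiteness of even lattices of fixed signature and bounded discriminant then concludes the proof.

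The main obstacle is that the Hirzebruch--Mumford volume carries local $p$-adic densities which must be controlled uniformly from below as $L$ ranges over an a priori infinite family; without such uniform control the asymptotic comparison between $\mathcal{L}^n$ and $\vol(\mathcal{F}_{K_\ast})$ could fail to be effective. This uniform control is the technical heart of \cite{Ma} for volume estimates of this type, so in the present setting the proof really reduces to producing the linear equivalence above, identifying $H_\ast$ with the image of $\mathcal{F}_{K_\ast}$, and invoking that finiteness theorem.
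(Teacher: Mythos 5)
Your overall strategy---converting the existence of $F$ with $\lambda(F)\leq\lambda$ into a divisor-class relation on $\FL$ and then leaning on the finiteness machinery of \cite{Ma}---is the paper's strategy in spirit, but the paper's actual proof is much shorter and your explicit volume comparison, taken as a self-contained argument, has a genuine gap. The paper simply observes that $\divi(F)\leq\alpha\lambda\,\mathcal{H}$ makes the $\Q$-divisor $(\lambda/2)H-\mathcal{L}$ $\Q$-effective on $\FL$, hence $\lambda^{-1}\mathcal{L}-H/2$ is not big, and then quotes \cite{Ma} Theorem 1.4, which states precisely that only finitely many even lattices containing $2U$ have this non-bigness property. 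Since you end by conceding that the uniform control of the local densities ``is the technical heart of \cite{Ma}'' and that one must invoke that finiteness theorem anyway, the honest version of your proof is the paper's two-line reduction; the intersection-theoretic and Hirzebruch--Mumford detour then does no work, and what you have written is a partial sketch of the proof of the theorem being cited rather than a proof of the Proposition.

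If the volume comparison is meant to stand on its own, the decisive step fails as stated. The Hirzebruch--Mumford volumes of $\FL$ and of $\mathcal{F}_{K_\ast}$, where $K_\ast=l^{\perp}\cap L$ has signature $(2,n-1)$ and discriminant comparable to that of $L$, differ only by roughly a factor $|\mathrm{disc}\,L|^{1/2}$ (further blurred by the density factors), so your justification ``$|\pi_L|\leq|A_L|$ grows only polynomially'' is not enough: a number of components growing even linearly in $|A_L|$ would swamp an exponent gap of $1/2$, and the displayed inequality would then impose no bound on the discriminant. What actually saves the count is $|\pi_L|\leq 2^{l(A_L)_2}\leq 2^{n+2}$, since the length of $A_L$ is at most $\rk L=n+2$, so the number of components is bounded by a constant depending only on $n$; you need this (or an equivalent bound), and it is not in your argument. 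Two smaller points: your linear equivalence ignores the order-$2$ ramification of $\DL\to\FL$ along $\mathcal{H}$, which is the source of the factor $1/2$ in the paper's statement ``$(\lambda/2)H-\mathcal{L}$ is $\Q$-effective''; and the discriminant relation should be $2\,|\mathrm{disc}\,K_\ast|=|\mathrm{disc}\,L|\cdot(2/\divi(l))^2$, not $|\mathrm{disc}\,L|\cdot\divi(l)^2$. Neither of these two affects the finiteness conclusion, but the component-count issue and the missing density control do.
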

 
\begin{proof}
Suppose we have a $2$-reflective form $F$ as above. 
If $\mathcal{L}$ denotes the ${\Q}$-line bundle over ${\FL}$ of modular forms of weight $1$, 
then the ${\Q}$-divisor $(\lambda/2)H-\mathcal{L}$ of ${\FL}$ is ${\Q}$-effective. 
In particular, $\lambda^{-1}\mathcal{L}-H/2$ is not big. 
According to \cite{Ma} Theorem 1.4, there are only finitely many even lattices containing $2U$ with this property. 
\end{proof}

\begin{remark}\label{remark:effective bound}
Given $\lambda$, one would be able to enumerate all \textit{possible} $L$ as in the proposition: 
they should satisfy the inequality 
\begin{equation*}
\sqrt{|A_L|}  \; < \;  (n\lambda/2) \cdot \left( 1+\lambda \right)^{n-1} \cdot \{ \, 9\, f_{AI}(n)+2^{n-2} f_{AII}(n) \, \}, 
\end{equation*}
where $f_{*}(n)$ are the functions defined in \cite{Ma} \S 5.3. 
This estimate is a weakest version and could be improved when the class of lattices is specified. 
See \cite{Ma} \S 3, \S 4.2, \S 4.3 and \S 5.3 for more detail. 
For instance, when $F$ does not vanish at $\mathcal{H}_0$, the term $2^{n-2} f_{AII}(n)$ can be removed. 
\end{remark}

A natural approach to estimate the maximal slopes is to consider restriction to modular curves. 
Let $F$ be a $2$-reflective form for $L$ and $\mathcal{H}_{*}\subset\mathcal{H}$ be the component 
where $F$ attains its maximal slope where $*\in \{ 0, \pi_L \}$. 
\textit{Assume} that we have a sublattice $K\subset L$ of signature $(2, 1)$, not necessarily primitive, 
that satisfies the following ``genericity'' conditions: 
\begin{enumerate}
\item[(i)] for every $(-2)$-vector $l\in L$ we have $(l, K)\not\equiv 0$; 
\item[(ii)] there exists a $(-2)$-vector $l'\in L$ which defines a component of $\mathcal{H}_{*}$ and 
whose orthogonal projection to $K\otimes{\Q}$ has negative norm. 
\end{enumerate}
The $1$-dimensional submanifold 
${\DK}=K^{\perp}\cap{\DL}$ of ${\DL}$ is naturally isomorphic to the upper half plane ${\HH}\subset{\proj}^1$ 
through its quadratic embedding. 
The condition (i) implies that the restriction $f=F|_{\DK}$ is not identically zero. 
$f$ is a modular form on ${\DK}$ with respect to ${\Ost}(K)$. 
Note that as a modular form on ${\HH}$, the weight of $f$ is twice the weight of $F$. 
Next the condition (ii) says that ${\DK}$ has intersection with $\mathcal{H}_*$, 
so the pullback $D_*=\mathcal{H}_*|_{\DK}$ is a nonzero divisor on $\mathcal{D}_K\simeq\mathbb{H}$. 

\begin{lemma}\label{lem:estimate slope via curve}
Assume that we have a sublattice $K\subset L$ of signature $(2, 1)$ satisfying the conditions (i) and (ii) above. 
Then there exists a constant $\lambda_K<\infty$ depending only on the isometry class of $K$ such that 
\begin{equation*}
\lambda(F) \leq \lambda_K. 
\end{equation*}
\end{lemma}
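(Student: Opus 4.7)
The plan is to restrict $F$ to the one-dimensional submanifold $\DK$ and apply the classical valence formula for modular forms in one variable. After replacing $K$ with its primitive hull in $L$ (which preserves $\DK$ and both conditions (i) and (ii), since the $\Q$-span is unchanged), I may assume $K$ is primitive. Condition (i) then guarantees that $f:=F|_{\DK}$ is not identically zero: otherwise $\DK$ would be contained in some quadratic divisor $l^{\perp}$ cut out by a $(-2)$-vector $l$, forcing $(l,K)\equiv 0$.

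The key step is to identify a concrete Fuchsian group, depending only on the isometry class of $K$, for which $f$ is modular. By the discriminant-form technique of Nikulin, $\Ost(K)$ embeds into $\Ost(L)$ via extension by the identity on $K^{\perp}$: any $g\in\Ost(K)$ acts trivially on $A_K$, so $g\oplus{\rm id}_{K^{\perp}}$ acts trivially on $A_K\oplus A_{K^{\perp}}$; it therefore preserves the glue subgroup $L/(K\oplus K^{\perp})\subset A_K\oplus A_{K^{\perp}}$ and descends to the trivial action on the subquotient $A_L$. This extension preserves $\DK$, so $f$ is a modular form of weight $2\alpha$ for the arithmetic Fuchsian group $\Ost(K)\subset\aut(\DK)$, which depends only on $K$.

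By condition (ii), there is a $(-2)$-vector $l\in L$ defining a component of $\Hast$ whose orthogonal projection $l_K$ to $K\otimes\Q$ has negative norm. Then $\DK\cap l^{\perp}=l_K^{\perp}\cap\DK$ consists of a single point $p$, and the intersection is transverse because $\DK\not\subset l^{\perp}$. Consequently, $f$ vanishes at $p$ to order at least $\beta_{*}$, the vanishing order of $F$ along $\Hast$.

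Finally, the valence formula for weight-$k$ modular forms on the modular curve $\Ost(K)\backslash\DK$ yields a bound $v_{\tau}(f)\le c_K\cdot k$ at every point $\tau\in\DK$, where $c_K$ can be expressed in terms of the hyperbolic covolume of $\Ost(K)\backslash\DK$ and the maximal order of its elliptic stabilizers. Taking $k=2\alpha$ and $\tau=p$ gives $\beta_{*}\le 2c_K\alpha$; since $\Hast$ was chosen so that $\beta_{*}/\alpha=\lambda(F)$, we obtain $\lambda(F)\le 2c_K=:\lambda_K$. The main technical point is verifying the embedding $\Ost(K)\hookrightarrow\Ost(L)$; once that is in hand, everything reduces to one-variable theory applied to a specific arithmetic Fuchsian group determined by $K$.
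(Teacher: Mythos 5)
Your proposal is correct and follows essentially the same route as the paper: restrict $F$ to $\DK$, observe that $f=F|_{\DK}$ is a nonzero modular form of weight $2\alpha$ for ${\Ost}(K)$, note that condition (ii) forces $f$ to vanish to order at least $\beta_{*}$ at the point $\mathcal{H}_{*}\cap\DK$, and bound that order by the weight via the one-variable valence formula; the glue-group argument you spell out for ${\Ost}(K)\hookrightarrow{\Ost}(L)$ is exactly the justification the paper leaves implicit. One small caution: the preliminary replacement of $K$ by its primitive hull is both unnecessary and, taken literally, weakens the conclusion, since the constant you then produce depends on the isometry class of the saturation, which is determined by the embedding $K\subset L$ rather than by $K$ alone --- and in the later application $K$ must range over a fixed finite pool. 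Either drop that reduction (your extension argument works verbatim for non-primitive $K$, because $L\subset K^{\vee}\oplus(K^{\perp})^{\vee}$ holds for any sublattice with $K\oplus K^{\perp}$ of finite index in $L$, which is how the paper proceeds), or note that the saturation is an even overlattice of $K$, of which there are only finitely many up to isometry, and take the maximum of the resulting constants.
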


\begin{proof}
Let $f=F|_{\DK}$ and $D_*=\mathcal{H}_*|_{\DK}$ as above. 
If ${\rm wt}(f)$ is the weight of $f$ as a modular form on ${\HH}$, 
we have 
\begin{equation*}
{\divi}(f) = {\rm wt}(f)/2 \cdot \lambda(F) \cdot D_* + D 
\end{equation*}
for some effective, ${\Ost}(K)$-invariant divisor $D$ on ${\DK}$. 
We view this equality on a fundamental domain of ${\Ost}(K)$ for ${\DK}$. 
If we write ${\rm deg}'({\divi}(f))$ simply for the sum of orders of zeros of $f$ over the points of the fundamental domain, 
we thus obtain 
\begin{equation*}
\lambda(F) \; \leq \; \frac{2\, {\rm deg}'({\divi}(f))}{{\rm wt}(f)}. 
\end{equation*}
It is well-known that the right hand side can bounded only in terms of geometric invariants of the compactification of 
$\mathcal{F}_K$ such as genus, number of cusps and orders of stabilizers 
(cf.~\cite{Sh} Proposition 2.16). 
\end{proof}

\subsection{A finite pool of modular curves}\label{ssec:pool}

For a prime $p$ we write $l(A_L)_p$ for the length of the $p$-component of the discriminant group $A_L$. 
Let us assume until Corollary \ref{cor:finiteness under l<n-3} that $L$ satisfies 
\begin{equation}\label{eqn:length A_L condition}
l(A_L)_2\leq n-3, \qquad l(A_L)_p\leq n-4 \; \:  \textrm{for} \; \:  p>2. 
\end{equation}
By the result of Nikulin \cite{Ni1} such lattices $L$ contain $2U$, so that the results of \S \ref{ssec:slope} can be applied. 
We shall construct, with $n$ fixed, 
a finite set $\mathcal{P}_n$ of isometry classes of even lattices of signature $(2, 1)$ such that 
for \textit{arbitrary} $F$ and $L$ we can find a sublattice $K\subset L$ 
satisfying the conditions (i), (ii) in \S \ref{ssec:slope} from this pool. 

Here is a notation: 
for an even lattice $N$ of any signature, we write $\Delta(N)$ for the set of $(-2)$-vectors in $N$, 
and $R(N)\subset N$ the sublattice generated by $\Delta(N)$. 

Let us first define $\mathcal{P}_n$. 
Let $\mathcal{R}_n$ be the set of isometry classes of negative-definite $(-2)$-root lattices of rank $\leq n-2$. 
Since the members of $\mathcal{R}_n$ are direct sums of $A, D, E$ lattices, 
$\mathcal{R}_n$ is a finite set. 
We define a positive integer $a_n$ through  
the maximal norms of integral vectors in $R\in\mathcal{R}_n$ that are not orthogonal to any $(-2)$-vector:   
\begin{equation*}
-a_n = 
\min_{R\in\mathcal{R}_n} [ \: \max_{m\in R} \{ \, (m, m) \: | \: (m, l)\ne0 \: \textrm{for all} \: l\in\Delta(R) \, \} \: ]. 
\end{equation*}
We also define a positive integer $b_n$ through 
the minimal norms of integral vectors in the Weyl chambers of $U\oplus kA_1$ with $0\leq k\leq n-2$: 
\begin{equation*}
b_n = 
\max_{0\leq k\leq n-2} [ \: \min_{\begin{subarray}{c}m\in U\oplus kA_1 \\ (m, m)>0 \end{subarray}} 
\{ \, (m, m) \: | \: (m, l)\ne0 \: \textrm{for all} \: l\in\Delta(U\oplus kA_1) \, \} \: ]. 
\end{equation*}
Then we set 
\begin{equation*}
\mathcal{P}_n' = 
\{ \langle 4 \rangle \oplus  \langle 4 \rangle \oplus  \langle -a \rangle \: | \: 0\leq a \leq a_n \},  
\end{equation*}
\begin{equation*}
\mathcal{P}_n'' = 
\{ U \oplus  \langle b \rangle \: | \: 0\leq b \leq b_n \},  
\end{equation*}
\begin{equation*}
\mathcal{P}_n = \mathcal{P}_n' \cup \mathcal{P}_n''. 
\end{equation*}

\begin{lemma}\label{prop:finite pool}
Let $n\geq4$ be fixed. 
Then for any even lattice $L$ of signature $(2, n)$ with the condition \eqref{eqn:length A_L condition} 
and any $2$-reflective form $F$ for $L$, 
we can find a sublattice $K\subset L$ that satisfies the genericity conditions (i) and (ii) in \S \ref{ssec:slope} 
and that is isometric to a member of $\mathcal{P}_n$. 
\end{lemma}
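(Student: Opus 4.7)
The plan is to construct $K$ explicitly. Thanks to \eqref{eqn:length A_L condition}, Nikulin's theory provides a decomposition $L = 2U\oplus N$ with $N$ negative definite of rank $n-2$ and $A_L\cong A_N$. The Eichler criterion (valid since $L\supset 2U$) lets us normalize any primitive $(-2)$-vector $l'\in L$ defining the component $\mathcal{H}_*$ within its $\Ost(L)$-orbit. I would split into the two cases $\mathcal{H}_*=\mathcal{H}_0$ (divisor $1$) and $\mathcal{H}_*=\mathcal{H}_\mu$ (divisor $2$), matching the sub-families $\mathcal{P}_n''$ and $\mathcal{P}_n'$ of the pool.

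When $\mathcal{H}_* = \mathcal{H}_0$, all primitive divisor-$1$ $(-2)$-vectors form a single $\Ost(L)$-orbit, so we may assume $l' = e_1-f_1\in U_1\subset 2U$. Set $K = U_1\oplus \Z m$ with $m\in U_1^\perp = U_2\oplus N$ of positive norm $b$ and generic with respect to the $(-2)$-vectors of $U_2\oplus N$. Then $l'\in U_1\subset K$ has projection $l'$ itself of norm $-2$ (verifying (ii)), and $K^\perp\cap L = m^\perp\cap(U_2\oplus N)$ has no $(-2)$-vector by genericity of $m$ (verifying (i)); the abstract $K\cong U\oplus\langle b\rangle$ lies in $\mathcal{P}_n''$ once $b\leq b_n$. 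When $\mathcal{H}_* = \mathcal{H}_\mu$, normalize $l'$ so that it lies in $N$ as a root, and take $K = \langle v_1, v_2\rangle\oplus\Z w$ with $v_i = e_i+2f_i\in U_i$ (so $(v_i, v_j) = 4\delta_{ij}$) and $w\in N$ generic in the root lattice $R(N)$ with $a := -(w,w)$. Then $K\cong\langle 4\rangle^{\oplus 2}\oplus\langle-a\rangle$, and a direct computation gives $K^\perp\cap L = \langle-4\rangle^{\oplus 2}\oplus(w^\perp\cap N)$: the first summand has minimum norm $-4$, and the second has no roots by genericity, so (i) holds; (ii) follows because $l'$ is a root and $w$ is generic, hence $(l', w)\neq 0$ and the projection of $l'$ to $K\otimes\Q$ has negative norm.

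The main obstacle is establishing the norm bounds $a\leq a_n$ and $b\leq b_n$. The former is immediate: $R(N)\in\mathcal{R}_n$ is a negative-definite $(-2)$-root lattice of rank at most $n-2$, so the minimal squared length of a generic vector in $R(N)$ is bounded by $a_n$ by definition. The latter is more delicate: I would choose $m = e_2+kf_2+w$ with $w\in R(N)$ generic of minimal squared length ($\leq a_n$) and $k$ the smallest integer making $(m,m)>0$. Verifying that $m$ is generic against \emph{all} hyperbolic $(-2)$-vectors of $U_2\oplus N$ (not just those of $R(N)$) requires a case analysis of the hyperbolic root system that effectively reduces to the constraint on $U\oplus(\rk R(N))A_1$, yielding $(m,m)\leq b_n$ by definition since $\rk R(N)\leq n-2$. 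A supplementary step must also handle the situation where $\mu\in\pi_L$ admits no norm $-2$ representative in $N$ alone, which is where the length hypothesis \eqref{eqn:length A_L condition} enters via the structure of $A_N$.
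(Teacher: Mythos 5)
The genuine gap is in your treatment of the $\mathcal{H}_0$ case. You propose $K=U_1\oplus\Z m$ with $m$ a positive norm vector of $U_2\oplus N$ that is non-orthogonal to \emph{every} $(-2)$-vector of $U_2\oplus N$, and you need $(m,m)\leq b_n$. But $b_n$ is defined only through the lattices $U\oplus kA_1$, $0\leq k\leq n-2$, while the root system of $U_2\oplus N$ can be much larger (for instance $R(N)=E_8$ gives the hyperbolic root system of $U\oplus E_8$, whose Weyl chambers are far smaller than those of $U\oplus 8A_1$). Enlarging the root system only strengthens the genericity constraint, so the minimal norm of an admissible $m$ in $U\oplus N$ is bounded \emph{below}, not above, by the corresponding quantity for $U\oplus(\rk R(N))A_1$; your claimed ``reduction'' to $U\oplus(\rk R(N))A_1$ therefore goes in the wrong direction and is unsubstantiated. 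Concretely, your candidate $m=e_2+kf_2+w$ is only guaranteed non-orthogonal to roots lying in $N$: for a root $r=ae_2+bf_2+x$ with $a\neq0$ one has $(m,r)=b+ka+(w,x)$, which can vanish. The paper avoids this by a further dichotomy inside the $\mathcal{H}_0$ case: if $N$ contains a divisor-$1$ root it uses the definite-type curve $\langle4\rangle\oplus\langle4\rangle\oplus\langle-a\rangle\in\mathcal{P}_n'$ with $m\in R(N)$ (bounded by $a_n$); only when every root of $N$ has divisor $2$ --- which forces each such root to split off an $A_1$, so $N=kA_1\oplus N'$ with $N'$ root-free --- does it take $K=U_1\oplus\Z m$ with $m\in U_2\oplus kA_1$, and then genericity against the roots of $U_2\oplus kA_1$ alone suffices because $K^{\perp}=(m^{\perp}\cap(U_2\oplus kA_1))\oplus N'$ is an orthogonal sum of two negative definite root-free lattices. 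That orthogonal splitting is exactly what makes the constant $b_n$ applicable, and it is missing from your argument.

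In the $\mathcal{H}_\mu$ case your construction coincides with the paper's, but the ``supplementary step'' you defer is a genuine step, and it is precisely where \eqref{eqn:length A_L condition} enters: one must prove that a divisor-$2$ $(-2)$-vector $l'$ representing $\mu$ has $l'^{\perp}\cap L\supset 2U$ (the paper's Claim, via $l(A_{l'^{\perp}\cap L})_p\leq n-4$ and Nikulin), so that a splitting $L=2U\oplus N$ with $l'\in N$ exists; the Eichler criterion by itself only moves $l'$ within its orbit and does not place it in the definite part of a prescribed splitting. Since this step is only flagged, and since the $\mathcal{H}_0$ case as you organize it cannot deliver the bound $b\leq b_n$, the proposal does not yet establish the lemma.
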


\begin{proof}
We first consider the case the maximal slope of $F$ is attained at $\mathcal{H}_0$. 
We write $L=U_1\oplus U_2\oplus M$ with $U_1, U_2$ two copies of $U$ and $M$ negative-definite of rank $n-2$. 
We denote by $e_1, f_1$ and $e_2, f_2$ the standard hyperbolic basis of $U_1$ and $U_2$ respectively. 
We have the following two possibilities: 
\begin{enumerate}
\item[(a)] $M$ contains a $(-2)$-vector $l$ with $(l, L)={\Z}$; 
\item[(b)] any $(-2)$-vector $l$ in $M$, if exists, satisfies $(l, L)=2{\Z}$. 
\end{enumerate}

In case (a), we choose a vector $m$ from the root lattice $R(M)$ 
that is not orthogonal to any $(-2)$-vector in $R(M)$ 
and that has the maximal norm among such vectors. 
By the definition of $a_n$ we have $(m, m)\geq -a_n$. 
Then we set 
\begin{equation*}
K = {\Z}(e_1+2f_1) \oplus  {\Z}(e_2+2f_2) \oplus {\Z}m. 
\end{equation*}
This lattice belongs to $\mathcal{P}_n'$. 
The orthogonal complement in $L$ is described as 
\begin{eqnarray*}
K^{\perp} 
& = & {\Z}(e_1-2f_1) \oplus  {\Z}(e_2-2f_2) \oplus (m^{\perp}\cap M) \\ 
& \simeq &  \langle -4 \rangle \oplus  \langle -4 \rangle \oplus  (m^{\perp}\cap M). 
\end{eqnarray*}
Since all $(-2)$-vectors of $M$ are contained in $R(M)$, 
the lattice $m^{\perp}\cap M$ contains no $(-2)$-vector by the definition of $m$. 
Hence $K^{\perp}$ contains no $(-2)$-vector, which verifies the condition (i). 
By assumption we have a $(-2)$-vector $l\in M$ with $(l, L)={\Z}$. 
This $l$ defines a component of $\mathcal{H}_0$. 
Since $(l, m)\ne0$ by the definition of $m$, 
the orthogonal projection of $l$ to ${\Q}m$ is not zero vector. 
This gives the condition (ii). 

In case (b), any $(-2)$-vector in $M$ generates an orthogonal direct summand. 
Hence we can write $M=kA_1\oplus M'$ with $0\leq k\leq n-2$ and $M'$ containing no $(-2)$-vector. 
We take a positive norm vector $m$ from $U_2\oplus kA_1$ 
that is not orthogonal to any $(-2)$-vector in $U_2\oplus kA_1$ 
and that has the minimal norm among such vectors. 
We have $(m, m)\leq b_n$ by the definition of $b_n$. 
Then we put  
\begin{equation*}
K = U_1 \oplus {\Z}m, 
\end{equation*}
which belongs to $\mathcal{P}_n''$. 
Its orthogonal complement in $L$ is described as 
\begin{equation*}
K^{\perp} = (m^{\perp}\cap(U_2\oplus kA_1)) \oplus M'. 
\end{equation*}
By construction, both $m^{\perp}\cap(U_2\oplus kA_1)$ and $M'$ contain no $(-2)$-vector. 
Hence $K^{\perp}$ has no $(-2)$-vector too, which implies (i). 
Since $K$ contains the $(-2)$-vector $e_1-f_1\in U_1$ which satisfies $(e_1-f_1, L)={\Z}$, we also have (ii).

Next we consider the case the $2$-reflective form $F$ attains its maximal slope 
at $\mathcal{H}_{\mu}$ with $\mu\in\pi_L$. 

\begin{claim}
If $l\in L$ is a $(-2)$-vector with $(l, L)=2{\Z}$, 
then $l^{\perp}\cap L$ contains $2U$. 
\end{claim}

\begin{proof}
Since we have the splitting $L={\Z}l\oplus L'$ where $L'=l^{\perp}\cap L$, 
we see that  
$l(A_{L'})_2 = l(A_L)_2-1 \leq n-4$  
and $l(A_{L'})_p = l(A_L)_p \leq n-4$ for $p>2$. 
Then we can apply the result of Nikulin \cite{Ni1}. 
\end{proof}

\noindent
By this claim we can find a splitting $L=U_1\oplus U_2\oplus M$ such that 
$M$ contains a $(-2)$-vector which defines a component of $\mathcal{H}_{\mu}$. 
Then we can repeat the same construction as in the case (a) above. 
\end{proof}

By Lemmas \ref{lem:estimate slope via curve} and \ref{prop:finite pool}, the inequality 
\begin{equation*}
\lambda(F) \leq \max_{K\in\mathcal{P}_n} \, (\lambda_K) 
\end{equation*}
holds for any $2$-reflective form $F$ for lattices $L$ with the condition \eqref{eqn:length A_L condition}. 
Applying Proposition \ref{prop:finiteness under bounded slope}, we obtain the following. 

\begin{corollary}\label{cor:finiteness under l<n-3}
Let $n\geq4$ be fixed. 
Then there are only finitely many $2$-reflective lattices $L$ of signature $(2, n)$ with 
the condition \eqref{eqn:length A_L condition}. 
\end{corollary}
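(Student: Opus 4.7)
The plan is a straightforward synthesis of the three main ingredients just assembled: Proposition \ref{prop:finiteness under bounded slope}, which reduces finiteness to a uniform upper bound on the maximal slope $\lambda(F)$; Lemma \ref{lem:estimate slope via curve}, which bounds $\lambda(F)$ by a constant $\lambda_K$ attached to any signature-$(2,1)$ sublattice $K\subset L$ satisfying the genericity conditions (i) and (ii); and Lemma \ref{prop:finite pool}, which furnishes such a $K$ drawn from the finite pool $\mathcal{P}_n$.

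First I would record that condition \eqref{eqn:length A_L condition} forces every such $L$ to contain $2U$ as a direct summand by the result of Nikulin \cite{Ni1}. This ensures that the slope decomposition of \S\ref{ssec:slope} and the hypothesis of Proposition \ref{prop:finiteness under bounded slope} both apply. Then, given any $2$-reflective form $F$ on any such $L$, Lemma \ref{prop:finite pool} produces a sublattice $K\subset L$ isometric to a member of $\mathcal{P}_n$ and satisfying (i),(ii), while Lemma \ref{lem:estimate slope via curve} supplies $\lambda(F)\le\lambda_K$. Since $\mathcal{P}_n$ is a finite set, setting
\[
\lambda_n \;=\; \max_{K\in\mathcal{P}_n}\lambda_K \;<\;\infty
\]
yields $\lambda(F)\le\lambda_n$ uniformly, independently of the particular $L$ and $F$. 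Applying Proposition \ref{prop:finiteness under bounded slope} with $\lambda=\lambda_n$ then completes the proof.

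There is no genuine obstacle remaining: the difficult work has already been carried out in proving Lemma \ref{prop:finite pool}, where the genericity sublattice $K$ was constructed uniformly for all $L$ satisfying \eqref{eqn:length A_L condition}, and in Lemma \ref{lem:estimate slope via curve}, where the constant $\lambda_K$ was seen to depend only on the isometry class of $K$ (not on the embedding $K\hookrightarrow L$). It is precisely this dependence that legitimizes taking a maximum over isometry classes in $\mathcal{P}_n$; once granted, the corollary is immediate.
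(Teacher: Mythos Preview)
Your proposal is correct and follows essentially the same approach as the paper: combine Lemma \ref{prop:finite pool} with Lemma \ref{lem:estimate slope via curve} to obtain the uniform bound $\lambda(F)\le\max_{K\in\mathcal{P}_n}\lambda_K$, and then invoke Proposition \ref{prop:finiteness under bounded slope}. The paper records this argument in a single sentence just before stating the corollary, and your additional remark that condition \eqref{eqn:length A_L condition} guarantees $2U\subset L$ via Nikulin's theorem is exactly what the paper notes at the outset of \S\ref{ssec:pool}.
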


We note that in case the maximal slope is attained at $\mathcal{H}_0$, 
we used only the condition that $L$ contains $2U$ in the proof of Proposition \ref{prop:finite pool}. 
Hence we also have the following variant. 

\begin{corollary}\label{cor:finiteness variation}
Let $n\geq3$ be fixed. 
Then there are only finitely many even lattices of signature $(2, n)$ and containing $2U$ 
which has a $2$-reflective form whose maximal slope is attained at $\mathcal{H}_0$. 
\end{corollary}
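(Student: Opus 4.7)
The plan is to mirror the argument of Corollary \ref{cor:finiteness under l<n-3}, exploiting the observation (noted in the paragraph preceding the corollary) that the part of Lemma \ref{prop:finite pool} which handles the case ``maximal slope attained at $\mathcal{H}_0$'' uses only the hypothesis $L \supseteq 2U$ and never invokes the length condition \eqref{eqn:length A_L condition}.

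First I would verify this observation in detail. The relevant portion of the proof of Lemma \ref{prop:finite pool} chooses a decomposition $L = U_1 \oplus U_2 \oplus M$ and divides into subcases (a) and (b) according as $M$ does, or does not, contain a $(-2)$-vector of divisor $1$ in $L$. In both subcases the construction yields a sublattice $K \subset L$ isometric to a member of $\mathcal{P}_n = \mathcal{P}_n' \cup \mathcal{P}_n''$ satisfying the genericity conditions (i) and (ii) of \S\ref{ssec:slope}: in case (a) the chosen $(-2)$-vector in $M$ gives a component of $\mathcal{H}_0$ with nonzero projection to $\Q m$, and in case (b) the vector $e_1 - f_1 \in U_1 \subset K$ is itself a $(-2)$-vector of divisor $1$ in $L$, hence defines a component of $\mathcal{H}_0$ whose projection to $K \otimes \Q$ is negative. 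The length condition \eqref{eqn:length A_L condition} is invoked in Lemma \ref{prop:finite pool} only later, to handle the ``maximal slope at $\mathcal{H}_\mu$'' case via the Claim on splitting off $(-2)$-vectors of divisor $2$.

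With that in hand, I would deduce the corollary as follows. For any even lattice $L \supseteq 2U$ of signature $(2, n)$ and any $2$-reflective form $F$ for $L$ whose maximal slope is attained at $\mathcal{H}_0$, the construction above produces a sublattice $K \subset L$ isometric to a member of $\mathcal{P}_n$ and satisfying (i), (ii). Lemma \ref{lem:estimate slope via curve} will then yield
\[
\lambda(F) \; \leq \; \lambda_K \; \leq \; \max_{K' \in \mathcal{P}_n}\, \lambda_{K'},
\]
and the right-hand side is a finite constant depending only on $n$. Proposition \ref{prop:finiteness under bounded slope} will then complete the proof.

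The main ``obstacle'', such as it is, lies in the bookkeeping of the second paragraph: one must check that the verification of (i) and (ii) in cases (a) and (b) genuinely goes through using only $L \supseteq 2U$, and in particular that the $(-2)$-vector of divisor $1$ providing the needed component of $\mathcal{H}_0$ exists in $K$ in each case. No new idea beyond the machinery already set up in \S\ref{ssec:slope}--\S\ref{ssec:pool} should be required.
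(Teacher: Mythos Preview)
Your proposal is correct and follows exactly the paper's own argument: the paper simply observes that in the proof of Lemma~\ref{prop:finite pool} the $\mathcal{H}_0$-case uses only the hypothesis $L\supseteq 2U$, and then invokes Lemma~\ref{lem:estimate slope via curve} and Proposition~\ref{prop:finiteness under bounded slope} as you do. Your detailed verification of cases (a) and (b) is more explicit than the paper's one-sentence remark, but the logic is identical.
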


This particularly applies to those $L$ containing $2U$ with $\pi_L=\emptyset$. 
When $n=3$, a more general classification is given in \cite{G-N2} \S 5.2 and \cite{G-N4} \S 2.

To deduce the finiteness for general lattices, we take overlattices following the next lemma. 

\begin{lemma}\label{lem:economic overlattice}
Let $A$ be a finite abelian group of exponent $e(A)$ endowed with a nondegenerate ${\Q}/2{\Z}$-valued quadratic form. 
Then there exists an isotropic subgroup $G\subset A$ such that 
$l(G^{\perp}/G)_2\leq 4$ and $l(G^{\perp}/G)_p\leq 3$ for $p>2$
and that the exponent of $G^{\perp}/G$ is equal to either $e(A)$ or $e(A)/2$. 
\end{lemma}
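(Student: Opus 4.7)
The plan is to argue prime by prime. Decompose $A = \bigoplus_p A_p$ into its $p$-primary components; the bilinear form vanishes on $A_p \times A_{p'}$ for distinct primes, so it suffices to produce, for each prime $p$, an isotropic subgroup $G_p \subset A_p$ with $l(G_p^\perp/G_p)_p \leq 3$ if $p$ is odd and $\leq 4$ if $p = 2$, and with the exponent of $G_p^\perp/G_p$ equal to $e(A_p)$ (with the single exception that for $p = 2$ it is allowed to equal $e(A_2)/2$). Then $G = \bigoplus_p G_p$ has $G^\perp/G = \bigoplus_p G_p^\perp/G_p$, and the conclusion follows.

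For a fixed prime $p$, I would proceed by induction on $l(A_p)$ via the following reduction step: if $l(A_p) > 3$ (resp. $> 4$ for $p = 2$), there exists an isotropic cyclic subgroup $\langle x\rangle \subset A_p$ such that $l(\langle x\rangle^\perp/\langle x\rangle)_p = l(A_p) - 2$ and the exponent is preserved, the sole exception being that in the entire iteration at $p = 2$ we may be forced once to choose $x$ of order $e(A_2)/2$ rather than $e(A_2)$. Iterating until the length bound is met and taking $G_p$ to be the accumulated isotropic subgroup yields the required $G_p$.

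To produce $x$, I would appeal to the Jordan decomposition of $A_p$ as a quadratic form, as in Nikulin's theory. Let $C$ denote a Jordan component at the top level $p^{a_1} = e(A_p)$; after rescaling by $p^{a_1}$, the reduction $C/pC$ carries a nondegenerate $\mathbb{F}_p$-quadratic form. For odd $p$, any such form in dimension $\geq 3$ has an isotropic vector, which lifts by Hensel's lemma to an isotropic element of order $p^{a_1}$ in $C$; this preserves the exponent and reduces length by $2$. If the top Jordan block has rank $\leq 2$ but $l(A_p) > 3$, then the lower Jordan blocks contribute length $\geq 2$, and one finds an isotropic element there of order strictly smaller than $p^{a_1}$; since the top Jordan block is untouched in the quotient, the exponent is again preserved. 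For $p = 2$, the same strategy works with enlarged rank thresholds, reflecting the more restrictive isotropy behavior of $\mathbb{F}_2$-quadratic forms and the richer $2$-adic Jordan classification.

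The main obstacle will be the delicate $p = 2$ case. At $p = 2$ the Jordan decomposition involves both even and odd types at each level, and a nondegenerate $\mathbb{F}_2$-quadratic form may be anisotropic on an elliptic plane, forcing a larger rank threshold before one can guarantee isotropic vectors. This is precisely the reason the bound $l(A)_2 \leq 4$ is weaker than $l(A)_p \leq 3$ for odd $p$, and the occasional drop from $e(A_2)$ to $e(A_2)/2$ is required only in the extremal situation where the top $2$-Jordan block is an anisotropic plane and we must descend one level to find an isotropic element. The bulk of the technical work lies in tracking the Jordan type of $\langle x\rangle^\perp/\langle x\rangle$ through each reduction step and verifying that the factor-of-$2$ exponent loss is incurred at most once throughout the iteration.
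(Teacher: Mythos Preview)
The paper does not actually prove this lemma: its entire proof reads ``This is essentially obtained in the proof of \cite{Ma} Lemmas A.6 and A.7. We will not need to repeat that argument.'' So there is no in-paper argument to compare your proposal against. Your outline---split into $p$-primary components, invoke the Jordan decomposition of the discriminant form at each prime, and iteratively strip off isotropic cyclic subgroups to drop the length by $2$ while controlling the exponent---is the natural strategy and is presumably what is carried out in the cited appendix of \cite{Ma}.

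That said, your sketch is not yet a proof, and the gap is exactly where you say it is. You assert that the factor-of-$2$ exponent loss at $p=2$ ``is incurred at most once throughout the iteration,'' but your explanation (that the drop arises only when the top $2$-Jordan block is an anisotropic plane, forcing a descent by one level) does not by itself preclude meeting the same obstruction again after the descent. In fact, in the scenario you describe---top block of rank $\leq 2$, isotropic $x$ chosen from a lower level---the top block survives untouched in $\langle x\rangle^{\perp}/\langle x\rangle$, so no exponent drop occurs at that step at all; the genuine drop must come from a different mechanism (e.g.\ combining a generator of the top block with a lower-level element to produce an isotropic vector of order strictly less than $e(A_2)$). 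Pinning down exactly when this is forced, and why it cannot recur, is the substance of the argument, and you have only gestured at it. Since this is precisely what \cite{Ma} Lemmas A.6--A.7 are cited for, consulting that reference would be the efficient way to complete your write-up.
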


\begin{proof}
This is essentially obtained in the proof of \cite{Ma} Lemmas A.6 and A.7. 
We will not need to repeat that argument. 
\end{proof}

Now the proof of Theorem \ref{main} can be completed as follows. 
Let $n\geq7$ be fixed. 
By Corollary \ref{cor:finiteness under l<n-3} we have a natural number $e$ such that 
an even lattice $L$ of signature $(2, n)$ satisfying the condition \eqref{eqn:length A_L condition} is not $2$-reflective 
whenever $e(A_L)\geq e$. 
By Corollary \ref{cor:reduction lattice} and Lemma \ref{lem:economic overlattice}, 
any even lattice $L$ of signature $(2, n)$ with $e(A_L)\geq 2e$ is not $2$-reflective. 
We have only finitely many even lattices $L$ of signature $(2, n)$ with $e(A_L)<2e$, 
because those lattices have bounded discriminant.  
This concludes the proof of Theorem \ref{main}.



\begin{thebibliography}{99}


\bibitem{Bo1}Borcherds, R. 
\textit{Automorphic forms on $O_{s+2,2}(R)$ and infinite products.}
Invent. Math. \textbf{120} (1995), no. 1, 161--213. 

\bibitem{Bo2}Borcherds, R. 
\textit{Automorphic forms with singularities on Grassmannians.} 
Invent. Math. \textbf{132} (1998), no. 3, 491--562. 


\bibitem{Bo4}Borcherds, R. 
\textit{Reflection groups of Lorentzian lattices.} 
Duke Math. J. \textbf{104} (2000), no. 2, 319--366. 

\bibitem{Br}Bruinier, J.~H.
\textit{Borcherds products on $O(2, l)$ and Chern classes of Heegner divisors.} 
Lecture Notes in Math. \textbf{1780}. Springer-Verlag, 2002. 

\bibitem{Br2}Bruinier, J.~H.
\textit{On the converse theorem for Borcherds products.} 
J. Algebra \textbf{397} (2014), 315--342. 
 

\bibitem{Gr}Gritsenko, V. 
\textit{Reflective modular forms in algebraic geometry.}
arXiv:1005.3753. 

\bibitem{G-H}Gritsenko, V.; Hulek, K.
\textit{Uniruledness of orthogonal modular varieties.}
J. Algebraic Geom. \textbf{23} (2014), 711--725. 

\bibitem{G-N0}Gritsenko, V.; Nikulin, V. 
\textit{$K3$ surfaces, Lorentzian Kac-Moody algebras and mirror symmetry.} 
Math. Res. Lett. \textbf{3} (1996) 211--229.  
 
\bibitem{G-N1}Gritsenko, V.; Nikulin, V. 
\textit{Automorphic forms and Lorentzian Kac-Moody algebras. I.} 
Internat. J. Math. \textbf{9} (1998) no.2, 153--200.  
 
\bibitem{G-N2}Gritsenko, V.; Nikulin, V. 
\textit{Automorphic forms and Lorentzian Kac-Moody algebras. II.}
Internat. J. Math. \textbf{9} (1998), no. 2, 201--275.  
 
\bibitem{G-N3}Gritsenko, V.; Nikulin, V. 
\textit{The arithmetic mirror symmetry and Calabi-Yau manifolds.} 
Comm. Math. Phys. \textbf{210} (2000), no. 1, 1--11. 
 
\bibitem{G-N4}Gritsenko, V.; Nikulin, V. 
\textit{On the classification of Lorentzian Kac-Moody algebras.} 
Russian Math. Surveys \textbf{57} (2002), no. 5, 921--979. 

\bibitem{Lo}Looijenga, E. 
\textit{Compactifications defined by arrangements. II. Locally symmetric varieties of type IV.} 
Duke Math. J. \textbf{119} (2003), no. 3, 527--588. 

\bibitem{Ma}Ma, S. 
\textit{Finiteness of stable orthogonal modular varieties of non-general type.} 
arXiv:1309.7121. 

\bibitem{Mar}Margulis, G.~A. 
\textit{Discrete subgroups of semisimple Lie groups.} 
Ergeb. Math. Grenzgeb. (3) \textbf{17}. Springer-Verlag, Berlin, 1991. 

\bibitem{Ni1}Nikulin, V.V. 
\textit{Integral symmetric bilinear forms and some of their applications.}
Math. USSR Izv. \textbf{14} (1980), 103--167. 

\bibitem{Ni2}Nikulin, V.V. 
\textit{A remark on discriminants for moduli of K3 surfaces as sets of zeros of automorphic forms.} 
J. Math. Sci. \textbf{81} (1996), no. 3, 2738--2743. 

\bibitem{Sc}Scattone, F. 
\textit{On the compactification of moduli spaces for algebraic K3 surfaces.} 
Mem. Amer. Math. Soc. \textbf{70} (1987), no.~374. 

\bibitem{Sch}Scheithauer, N.~R. 
\textit{On the classification of automorphic products and generalized Kac-Moody algebras.} 
Invent. Math. \textbf{164} (2006), 641--678. 

\bibitem{Sh}Shimura, G. 
\textit{Introduction to the Arithmetic Theory of Automorphic Functions.} 
Iwanami Shoten/Princeton University Press, 1971. 

\end{thebibliography}
\end{document}